%
%
\documentclass[10pt]{amsart}
\usepackage{latexsym, amsmath, amsfonts, amssymb, mathrsfs, fancyhdr, tikz, tikz-cd}
\usepackage{verbatim}
\usetikzlibrary{matrix,arrows,decorations.pathmorphing,calc}

\def\ds{\displaystyle}
\def\nin{\not \in}
\def\n{\noindent}
\def\R{\mathbb{R}}
\def\Rpq{\mathbb{R}^{p,q}}
\def\eij{e_{ij}}

\def\e{\varepsilon}
\def\s{\sigma}

\def\d{\delta}
\def\Tone{\mathcal{T}^{(1)}}
\def\T{\mathcal{T}}
\def\l{\lambda}
\def\En{\mathbb{E}^n}

\def\X{\mathcal{X}}
\def\V{\mathcal{V}}

\def\E{\mathbb{E}}
\def\a{\alpha}

\def\k{\kappa}
\def\t{\tau}

\def\N{\mathbb{N}}
\def\la{\langle}
\def\ra{\rangle}

\newtheorem{theorem}{Theorem}

\newtheorem{cor}[theorem]{Corollary}

\theoremstyle{definition}

\newtheorem{remark}{Remark}

\theoremstyle{remark}

\numberwithin{equation}{section}



\begin{document}

\title{Approximating continuous maps by isometries}

\author{Barry Minemyer}
\address{Department of Mathematics, The Ohio State University, Columbus, Ohio 43210}
\email{minemyer.1@osu.edu}


\subjclass[2010]{Primary 51F99, 52B11, 53B21, 53B30, 57Q35; Secondary 52A38, 52B70, 53C50, 57Q65}

\date{\today.}


\keywords{metric geometry, isometric embedding, polyhedral space, Euclidean polyhedra, indefinite metric polyhedra, h-principle, Minkowski space}

\begin{abstract}
The Nash-Kuiper Theorem states that the collection of $C^1$-isometric embeddings from a Riemannian manifold $M^n$ into $\E^N$ is $C^0$-dense within the collection 
of all smooth 1-Lipschitz embeddings provided that $n < N$.  
This result is now known to be a consequence of Gromov's more general $h$-principle.  
There have been some recent extensions of the Nash-Kuiper Theorem to Euclidean polyhedra, 
which in some sense provide a very specialized discretization of the $h$-principle.
In this paper we will discuss these recent results 
and provide generalizations 
to the setting of isometric embeddings of spaces endowed with indefinite metrics into Minkowski space.
The new observation is that, when dealing with Minkowski space, the assumption ``1-Lipschitz" can be removed.  
Thus, we obtain results about isometric embeddings that are $C^0$-dense within the collection of {\it all} continuous maps.
\end{abstract}

\maketitle



\section{Introduction}\label{section 1}

Let $(M^m,g)$ denote an $m$-dimensional Riemannian manifold.  
The famous Nash-Kuiper Theorem (\cite{Nash1}, \cite{Kuiper}) states that any smooth 1-Lipschitz embedding $f: (M^m, g) \rightarrow \En$ 
is $\e$-close to a $C^1$-isometric embedding for any $\e > 0$ provided $n > m$.  
Here, two maps $f, f^\prime : M \rightarrow \En$ are {\it $\e$-close} if $|f(x) - f^\prime (x)| < \e$ for all $x \in M$, which is sometimes also stated as {\it $C^0$-close}.
In other words, the Nash-Kuiper Theorem states that the collection of $C^1$-isometric embeddings is {\it $C^0$-dense} in the collection of all smooth 1-Lipschitz embeddings of $M$ into $\En$, 
provided that you have at least one degree of codimension.

When this result was first published by Nash in 1954 (in the case $m \leq n-2$) it was stunning to many mathematicians.  
This was due to the general ``flexibility" of $C^1$-isometric embeddings when compared to the known rigidity of $C^k$, $k \geq 2$, isometric embeddings. 
This is now known to be a specific consequence of Gromov's much more general $h$-principle, 
popularized by Gromov in \cite{Gromov PDR} and eloquently explained by Eliashberg and Mishachev in \cite{EM}.
In \cite{Gromov PDR} and \cite{Gromov green} Gromov used the $h$-principle to prove that any strictly short map between $n$-manifolds is $C^0$-close to a $C^0$-path isometry 
(i.e., a continuous map that preserves the length of paths).
So one sees that the necessity of having {\it any} codimension can be removed if we sacrifice the property of being an embedding (and one degree of differentiability).

A {\it Euclidean polyhedron} (or {\it polyhedral space}) is a metric space $\X$ equipped with a locally finite simplicial triangulation $\T$ such that 
every $k$-dimensional simplex of $\T$ is affinely isometric to a simplex in Euclidean space $\E^k$ (for all $k$).  
Note that, due to the triangulation being locally finite, all Euclidean polyhedra are proper (meaning that closed bounded sets are compact) and thus are geodesic metric spaces.
Such spaces clearly are not necessarily topological manifolds, so in some sense they are generalizations of manifolds.  
But they have the added bonus of the metric being flat when restricted to any simplex, so in that sense they are nicer than Riemannian manifolds.
In any case, any Riemannian manifold can be obtained as a ``nice" inverse limit of Euclidean polyhedra (see any of \cite{BBI}, \cite{Petrunin}, \cite{Minemyer3}).

In the same text where Gromov develops the $h$-principle \cite{Gromov PDR} he asks whether or not Euclidean polyhedra admit piecewise-linear isometries
into the same dimensional Euclidean space.  
Such a result would lead to a pl-analogue to Gromov's result above concerning the approximation of 1-Lipschitz maps between manifolds by isometries.
This question was answered in the affirmative by Zalgaller \cite{Zalgaller} and Krat \cite{Krat}, the former of which was the original motivation for Gromov's question.
In the spirit of the $h$-principle though, Krat asked if such pl isometries are $C^0$-dense within the collection of all 1-Lipschitz maps.  
She proved this result in \cite{Krat} for the case when $n=2$, and the result was generalized to all dimensions by Akopyan in \cite{Akopyan}.  
The case of pl isometric embeddings was originally considered in the case when $n=2$ by Burago and Zalgaller in \cite{BZ}, and recently considered by the author for all dimensions in \cite{Minemyer1}.  

The necessity of the assumption that all of the maps be ``1-Lipschitz" in the preceding results is clear.  
In Euclidean space there is no way to approximate a long path by a short path. 
But the reverse statement is clearly possible by approximating a short path by a much longer ``polygonal" path (see Figure \ref{seventhfig} below).  
If the target Euclidean space is replaced by Minkowski space $\Rpq$ though, then there is hope of removing this assumption.
In particular, the collection of pl path isometries (respectively isometric embeddings) may be $C^0$-dense within the collection of {\it all} continuous maps.

An {\it indefinite metric polyhedron} is a triple $(\X, \T, g)$ where $\X$ is a topological space, $\T$ is a simplicial triangulation of $\X$, and $g$ is a function that assigns a real number to every edge of $\T$.  
This edge function $g$ naturally associates to each $k$-dimensional simplex in $\T$ a unique quadratic form on $\R^k$, and in turn this assigns a unique indefinite metric structure to all of $\X$.
Note that these quadratic forms need not be positive definite nor even non-degenerate, but if all of these associated quadratic forms are positive definite then this just leads to a Euclidean polyhedron.
So in particular the class of indefinite metric polyhedra contains the class of Euclidean polyhedra as the special case when the quadratic form defined on every simplex is positive-definite.

Let $(\X, \T, g)$ be an indefinite metric polyhedron, and let $G$ denote the quadratic form determined by $g$.  
Let $f: \X \rightarrow \R^{p,q}$ be any continuous function.  
The map $f$ determines a unique indefinite metric $g_f$ on $(\X, \T)$ 
and this indefinite metric induces a quadratic form $G_f$ on each simplex of $\T$ as discussed above (please see Section \ref{section: preliminaries} for more details).  
We call $G_f$ the {\it induced quadratic form} of $f$.  
We say that $f$ is a {\it piecewise linear isometry} (or {\it pl isometry}) of $\X$ into $\Rpq$ if $f$ is piecewise linear (meaning that it is simplicial on some subdivision of $\T$) and if $G = G_f$ on all simplices in a subdivision of $\T$ on which $f$ is simplicial.  
The map $f$ is a {\it pl isometric embedding} if in addition to being a pl isometry it is also an embedding.

There have been some very recent results concerning {\it simplicial} isometric embeddings of indefinite metric polyhedra into Minkowski space $\R^{p,q}$ (see \cite{Minemyer2} and \cite{GZ}).  
These simplicial isometric embeddings require a high degree of codimension, and in that sense resemble the rigidity of $C^k$ isometric embeddings ($k>1$) of Riemannian manifolds into Euclidean space.
But what if we allow for piecewise-linear maps instead of simplicial?  
In this setting we can combine a Theorem due to Krat/Akopyan (Theorem \ref{Akopyan} in Section \ref{section: preliminaries}) with a few geometric tricks to prove the following Theorem.

\vskip 10pt

\begin{theorem}\label{thm:isometric embedding Minkowski space}
Let $(\X, \T, g)$ be an $n$-dimensional indefinite metric polyhedron with vertex set $\V$, 
and let $\{ \e_i \}_{i = 1}^{\infty}$ be a sequence of positive real numbers. 
Let $f: \X \rightarrow \R^{p,q}$ be a continuous function where $p \geq n$, $q \geq n$, and $p + q \geq 3n$, and fix a vertex $v \in \V$.
Then there exists a piecewise linear isometric embedding $h: \X \rightarrow \R^{p,q}$ such that for any $k \in \N$ and for any $x \in Sh^k(v)$, $|f(x) - h(x)| < \e_k$.  
\end{theorem}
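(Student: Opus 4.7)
The plan is to split both the indefinite form $g$ and the Minkowski target $\R^{p,q}$ into a positive and a negative piece, reducing the problem to two Euclidean ones to which Theorem \ref{Akopyan} can be applied. I would fix an auxiliary positive-definite piecewise Euclidean metric $g_0$ on $(\X,\T)$ (for instance, declaring each $k$-simplex to be a regular Euclidean $k$-simplex of unit edge length), and set $g_+ := g + C g_0$ and $g_- := C g_0$ for a constant $C > 0$ to be chosen later. For $C$ large enough on each simplex, both $g_+$ and $g_-$ are positive-definite quadratic forms, so $(\X,\T,g_+)$ and $(\X,\T,g_-)$ are honest Euclidean polyhedra satisfying $g_+ - g_- = g$. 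Correspondingly split $\R^{p,q} = \R^p \oplus \R^q$ and $f = (f_+, f_-)$. It suffices to produce pl isometries $h_+ : (\X,g_+) \to \R^p$ and $h_- : (\X,g_-) \to \R^q$ (in the Euclidean sense) with $h_\pm$ $C^0$-close to $f_\pm$; then $h := (h_+, h_-)$ automatically induces $G_h = G_{h_+} - G_{h_-} = g_+ - g_- = g$, so $h$ is a pl isometry into $\R^{p,q}$.

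Theorem \ref{Akopyan} requires a $1$-Lipschitz input, but $f_\pm$ is merely continuous; removing this hypothesis is where the Minkowski geometry is exploited. The key is that $C$ remains free. I would refine $\T$ to a sufficiently fine subdivision $\T'$ (chosen shell-by-shell, since $\X$ is only locally finite), and let $\tilde f_\pm$ be the pl interpolation of $f_\pm$ on $\T'$; by uniform continuity of $f$ on each compact shell, $\tilde f_\pm$ is $C^0$-close to $f_\pm$. Then I would choose $C$ large enough (allowing $g_0$ itself to grow with the shell index if necessary, by assigning larger edge lengths on outer edges) that on every simplex of $\T'$ the map $\tilde f_\pm$ is $1$-Lipschitz from $(\X,g_\pm)$ into Euclidean space: inflating $C$ scales $g_\pm$-distances like $\sqrt{C}$ without affecting either $\tilde f_\pm$ or the identity $g_+ - g_- = g$. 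This flexibility -- impossible in the purely Euclidean setting, where too-long maps cannot be shortened -- is exactly why the $1$-Lipschitz hypothesis can be dropped when the target has indefinite signature.

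With $\tilde f_\pm$ now $1$-Lipschitz relative to $g_\pm$, I would apply Theorem \ref{Akopyan} to obtain pl isometries $h_\pm : (\X,g_\pm) \to \R^n$ (sitting inside $\R^p$ or $\R^q$) that are $C^0$-close to $\tilde f_\pm$, hence within $\e_k$ of $f_\pm$ on $Sh^k(v)$. The combined map $h = (h_+, h_-)$ is then the desired pl isometry into $\R^{p,q}$. To promote it to an embedding, exploit the codimension hypothesis $p + q \geq 3n$: the two Euclidean isometries together use $2n$ target dimensions, leaving $p + q - 2n \geq n$ free directions in which to vary $h_\pm$ inside the $C^0$-dense family of pl isometries produced by Theorem \ref{Akopyan}, and a general-position argument (in the spirit of the embedding step in \cite{Minemyer1}) then arranges injectivity of $h$. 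The whole construction is run inductively on the shells $Sh^k(v)$, so that the tolerances $\e_k$ are respected one shell at a time.

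The main obstacle is the inductive bookkeeping across shells: on each new shell one must refine $\T'$, enlarge $g_0$ (and hence effectively $C$), and extend the pl isometric embedding already constructed on the inner shells without disturbing it. This calls for a local, boundary-respecting version of Theorem \ref{Akopyan} -- applicable to a compact subcomplex with prescribed values on its boundary -- so that the successive extensions match the previously-constructed $h$ continuously, and the $\e_j$-closeness and injectivity already achieved for $j < k$ are preserved.
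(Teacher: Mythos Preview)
Your decomposition $g = g_+ - g_-$ with $g_- = Cg_0$, together with the observation that inflating $C$ makes any pl approximation short relative to $g_\pm$, is exactly the mechanism the paper uses, phrased slightly differently: the paper chooses a very negative form $H$ with $H<G$ and $H<G_f$, and your $Cg_0$ plays the role of $-H$. For the pl \emph{isometry} (i.e.\ Corollary~\ref{cor:isometry Minkowski space}) your argument is correct and essentially the same as the paper's. Your worry about a ``local, boundary-respecting version'' of Theorem~\ref{Akopyan} is misplaced: that theorem already accepts shell-dependent tolerances $\e_k$, so once you let $g_0$ (equivalently $C$) grow with the shell index you apply Akopyan once globally to each of $(\X,g_\pm)$. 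No shell-by-shell inductive extension is needed, and the paper does not perform one.

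The genuine gap is the embedding step. Once $h_+$ and $h_-$ are produced by Theorem~\ref{Akopyan} they are specific maps; you cannot generically perturb them without destroying the isometry condition, so the phrase ``$p+q-2n$ free directions in which to vary $h_\pm$'' does not name an actual degree of freedom, and a general-position argument in the style of \cite{Minemyer1} does not apply to the pair $(h_+,h_-)$ after the fact. The paper handles this by a \emph{three-way} split $f = f^+ \oplus f^* \oplus f^-$, with $f^*$ landing in the middle $p+q-2n \geq n$ coordinates, and by running the two Akopyan steps \emph{sequentially} rather than in parallel. First $f^+\oplus f^*$ is perturbed to be a local embedding and $f^-$ is replaced by $h^-$; since $f^+\oplus f^*$ is untouched, a Lebesgue-number argument keeps the total map injective. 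Then $f^*\oplus h^-$ is perturbed to a local embedding --- this disturbs $G_{h^-}$ slightly, but only the strict inequality $G - G_f^* - G_h^- > G_f^+$ is required at that stage, so nothing breaks --- and finally $f^+$ is replaced by $h^+$ with target metric $G - G_f^* - G_h^-$. Sequentializing your construction in the same way, so that the target metric for $h_+$ depends on the already-constructed (and slightly perturbed) $h_-$, is what closes the gap.
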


\vskip 2pt

In particular, if one lets $\e_k = \e$ for all $k$, then one obtains as a Corollary:

\vskip 10pt

\begin{cor}\label{cor:no shell notation}
Let $(\X, \T, g)$ be an $n$-dimensional indefinite metric polyhedron, let $\e > 0$, and let $f: \X \rightarrow \R^{p,q}$ be a continuous function where $p \geq n$, $q \geq n$, and $p + q \geq 3n$.
Then there exists a piecewise linear isometric embedding $h: \X \rightarrow \R^{p,q}$ such that $|f(x) - h(x)| < \e$.  
\end{cor}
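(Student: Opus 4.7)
The plan is to deduce this corollary as an immediate specialization of Theorem \ref{thm:isometric embedding Minkowski space}, exactly as the sentence immediately preceding the statement suggests.

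Concretely, fix any vertex $v \in \V$ and define the constant sequence $\e_k := \e$ for every $k \in \N$. Applying Theorem \ref{thm:isometric embedding Minkowski space} to the data $(\X, \T, g)$, $f$, $v$, and $\{\e_k\}$ produces a piecewise linear isometric embedding $h : \X \to \Rpq$ satisfying $|f(x) - h(x)| < \e_k = \e$ for every $k \in \N$ and every $x \in Sh^k(v)$.

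It remains only to check that every point of $\X$ lies in $Sh^k(v)$ for some $k$. Since $\T$ is a locally finite simplicial triangulation, every vertex in the connected component of $v$ can be joined to $v$ by a finite edge path in $\T$, and every point of $\X$ belongs to the closed star of some vertex; consequently the component of $v$ is covered by $\bigcup_{k \in \N} Sh^k(v)$, and the bound $|f(x)-h(x)| < \e$ holds there. If $\X$ is disconnected, one applies this argument componentwise (each component contains at least one vertex of $\T$, since $\X$ is the union of the closed simplices of $\T$) and assembles the resulting maps, using arbitrarily small Minkowski-isometric translations on individual components to preserve injectivity across components if necessary. The only genuine work is encapsulated in Theorem \ref{thm:isometric embedding Minkowski space} itself; no new obstacle appears in passing to the corollary.
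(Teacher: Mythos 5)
Your proposal is correct and is essentially the paper's own argument: the paper obtains this corollary from Theorem \ref{thm:isometric embedding Minkowski space} precisely by taking the constant sequence $\e_k = \e$, relying on the fact (stated in Section \ref{section: preliminaries}) that the shells $Sh^k(v)$ partition $\X$. Your extra discussion of the disconnected case goes beyond what the paper records but does not change the approach, so no further comment is needed.
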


\vskip 2pt

So we see that the collection of pl isometric embeddings is $C^0$-dense within the collection of {\it all} continuous functions (provided that we have the codimension requirements listed in the Theorem).  
The notation ``$Sh^k(v)$" from Theorem \ref{thm:isometric embedding Minkowski space} will be defined in Section \ref{section: preliminaries}, but its purpose is simply to allow the $\e$ from Corollary \ref{cor:no shell notation} to taper to 0 as one moves further away from some fixed point $v$.
Lastly, note that these codimension requirements are likely not optimal, and it may be possible that one could obtain bounds as low as $p+q \geq 2n+1$.

An immediate Corollary of the proof of Theorem \ref{thm:isometric embedding Minkowski space} is the following:

\vskip 10pt

\begin{cor}\label{cor:isometry Minkowski space}
Let $(\X, \T, g)$ be an $n$-dimensional indefinite metric polyhedron with vertex set $\V$, 
and let $\{ \e_i \}_{i = 1}^{\infty}$ be a sequence of positive real numbers. 
Let $f: \X \rightarrow \R^{p,q}$ be a continuous function where both $p,q \geq n$, and fix a vertex $v \in \V$.
Then there exists a piecewise linear isometry $h: \X \rightarrow \R^{p,q}$ such that for any $k \in \N$ and for any $x \in Sh^k(v)$, $|f(x) - h(x)| < \e_k$.
\end{cor}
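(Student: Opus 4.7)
The plan is to follow the argument for Theorem \ref{thm:isometric embedding Minkowski space} almost verbatim, omitting only the steps that secure injectivity. The gap between the two hypotheses is precisely $p+q \ge 3n$ versus $p+q \ge 2n$, and I expect the stronger bound to be required only for a general position perturbation that upgrades a pl isometry to a pl isometric embedding. Since the corollary asks only for a pl isometry, this dimensional overhead should be unnecessary.

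I would split the target as $\R^{p,q} = \R^p \oplus \R^q$ and write every map $h: \X \to \R^{p,q}$ as a pair $h=(h_1,h_2)$, so that the induced quadratic form decomposes as $G_h = G_{h_1} - G_{h_2}$ on each simplex. Writing $f=(f_1,f_2)$ accordingly, I would first approximate $f$ by a pl map $\tilde f = (\tilde f_1, \tilde f_2)$ with $|\tilde f - f| < \e_k/3$ on each shell $Sh^k(v)$, using a standard simplicial approximation subordinated to a sufficiently fine subdivision of $\T$.

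Next, using the hypothesis $q \ge n$, I would perturb $\tilde f_2$ by small pl ``wrinkles'' supported on each top-dimensional simplex to obtain a pl map $h_2: \X \to \R^q$ that stays within $\e_k/3$ of $f_2$ on $Sh^k(v)$ and has the property that on every simplex the quadratic form $G + G_{h_2}$ is positive definite and strictly dominates $G_{\tilde f_1}$. This equips $\X$ with a new Euclidean polyhedron structure with respect to which $\tilde f_1$ is strictly 1-Lipschitz. Invoking Theorem \ref{Akopyan} for this structure, which requires the $p \ge n$ hypothesis, then produces a pl isometry $h_1: \X \to \R^p$ with $|h_1 - \tilde f_1| < \e_k/3$ on each shell. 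The pair $h = (h_1,h_2)$ is a pl map $\X \to \R^{p,q}$ whose induced quadratic form is $(G + G_{h_2}) - G_{h_2} = G$ on every simplex, hence a pl isometry, and it satisfies $|f - h| < \e_k$ on $Sh^k(v)$ by the triangle inequality.

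The step I expect to be the main obstacle is the construction of the wrinkled $h_2$. I must coordinate the timelike wrinkles so that they strictly dominate the spacelike deficit simplex-by-simplex, respect the tapering tolerances $\e_k$, and produce an actual Euclidean polyhedral structure rather than merely a simplex-wise positive form that might fail to glue consistently across the triangulation. Executing this across infinitely many shells with wrinkle amplitudes tapering at the correct rate is delicate, but it is precisely the technical core that the proof of Theorem \ref{thm:isometric embedding Minkowski space} already handles, so the corollary should follow without further work.
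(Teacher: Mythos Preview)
Your proposal is correct and follows essentially the same route as the paper: the corollary is obtained by running the argument for Theorem~\ref{thm:isometric embedding Minkowski space} with the middle component $f^{*}$ deleted, since its only role there was to supply enough coordinates for the general-position perturbations securing injectivity. The ``wrinkling'' step you flag as the main obstacle is exactly what the paper makes precise by introducing the auxiliary form $H$ with $H<G$ and $H<G_f$ and then applying the negative-definite version of Akopyan's theorem (Theorem~\ref{negative-definite Akopyan}) to the timelike component, so no further work is required.
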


\vskip 2pt

Isometric embeddings of manifolds into Minkowski space have been studied to some extent by Greene in \cite{Greene} and Gromov-Rokhlin in \cite{GR}.  
But neither of these publications considered the existence of such maps from a ``$C^0$-dense" standpoint.  
Essentially the same proof as that of Theorem \ref{thm:isometric embedding Minkowski space}, 
but by replacing Krat/Akopyan's Theorem \ref{Akopyan} by the Nash-Kuiper Theorem, proves:

\vskip 10pt

\begin{theorem}\label{theorem:Nash-Kuiper into Minkowski space}
Let $M$ be an $n$-dimensional manifold, let $g$ be a smooth metric tensor of any signature on $M$, 
and let $f:M \rightarrow \R^{p,q}$ be any continuous map with both $p,q \geq 2n$.  
Then for any $\e > 0$ there exists a $C^1$-isometric embedding $h:M \rightarrow \R^{p,q}$ such that $|f(x) - h(x)| < \e$ for all $x \in M$.  
That is, $h$ is $C^0$-close to $f$.  
\end{theorem}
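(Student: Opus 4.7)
The plan is to mimic the structure of the proof of Theorem \ref{thm:isometric embedding Minkowski space}, substituting the classical Nash-Kuiper theorem for Krat and Akopyan's pl-approximation result. Decompose $\Rpq = \R^p \oplus \R^q$ and write every map into $\Rpq$ in coordinates as $h = (h_+, h_-) \in \R^p \times \R^q$. A $C^1$ map $h$ is isometric with respect to $g$ precisely when $h_+^*\langle\cdot,\cdot\rangle_p - h_-^*\langle\cdot,\cdot\rangle_q = g$ as symmetric $2$-tensors on $M$. The strategy is therefore to locate a pair of Riemannian metrics $g_1$ and $g_2$ on $M$ with $g_1 - g_2 = g$ and then to invoke Nash-Kuiper separately to construct $h_+$ and $h_-$.

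First I would produce a smooth $F = (F_+, F_-)$ with $|F(x) - f(x)| < \e/2$ for all $x$ and such that both coordinate maps $F_+:M\to\R^p$ and $F_-:M\to\R^q$ are smooth embeddings. Standard mollification yields a smooth $C^0$-approximation of $f$, and the codimension hypothesis $p,q \geq 2n$ together with Whitney's theorem permits a further $C^0$-small perturbation making each of $F_+$ and $F_-$ an embedding. Since embeddings are immersions, both $F_+^*\langle\cdot,\cdot\rangle_p$ and $F_-^*\langle\cdot,\cdot\rangle_q$ are honest Riemannian metrics on $M$.

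Next I would choose a positive definite Riemannian metric $k$ on $M$ large enough that it pointwise strictly dominates the symmetric $2$-tensor $F_+^*\langle\cdot,\cdot\rangle_p - F_-^*\langle\cdot,\cdot\rangle_q - g$, and set $g_2 := F_-^*\langle\cdot,\cdot\rangle_q + k$ and $g_1 := g_2 + g$. By construction both $g_1$ and $g_2$ are Riemannian metrics on $M$ satisfying $g_1 - g_2 = g$, $g_1 > F_+^*\langle\cdot,\cdot\rangle_p$, and $g_2 > F_-^*\langle\cdot,\cdot\rangle_q$, so $F_+$ is a smooth strictly short embedding of $(M,g_1)$ into $\R^p$ and $F_-$ is a smooth strictly short embedding of $(M,g_2)$ into $\R^q$. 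Since $p > n$ and $q > n$, Nash-Kuiper produces $C^1$-isometric embeddings $h_+:(M,g_1) \to \R^p$ and $h_-:(M,g_2) \to \R^q$ with $|h_\pm(x) - F_\pm(x)| < \e/4$ for all $x \in M$. Setting $h := (h_+, h_-)$, the induced indefinite metric is $g_1 - g_2 = g$ and $|h(x) - f(x)| < \e$, as required.

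The main obstacle is the embedding step in the smooth approximation: one must start from an arbitrary continuous $f$ and approximate it by a smooth map whose $\R^p$ and $\R^q$ coordinates are \emph{individually} embeddings, and this is exactly what the codimension condition $p,q \geq 2n$ buys through Whitney. A secondary technical point is non-compactness of $M$, which forces Nash-Kuiper to be executed on an exhaustion of $M$ with errors tapering to zero near infinity (analogous to the role played by the shell constants $\{\e_k\}$ in Theorem \ref{thm:isometric embedding Minkowski space}); this is routine in the smooth setting, but deserves mention in a careful writeup.
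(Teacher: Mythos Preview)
Your argument is correct and follows the same overall strategy as the paper: split $\R^{p,q}$ into its positive and negative summands, use Whitney (via $p,q\ge 2n$) to make the coordinate maps immersions, manufacture auxiliary metrics rendering those maps strictly short, and invoke Nash--Kuiper on each factor. The one difference worth noting is that the paper applies Nash--Kuiper \emph{sequentially}---first to $f^-$ with target $H - G_f^+$, and only then to $f^+$ with target $G - G_h^-$---which forces two side arguments: the intermediate $h^-$ is only $C^1$, so one retreats to a nearby smooth approximation before the second step (the paper's remark (1)); and one must separately check that the combined map stays an embedding by routing the Nash--Kuiper normal field through the full $\R^{p,q}$ (remark (2)). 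You instead fix both target metrics $g_1,g_2$ at the outset and apply Nash--Kuiper \emph{in parallel}; and by arranging each of $F_+,F_-$ to be an embedding (rather than merely an immersion, as in the paper) you get the embedding property of $h=(h_+,h_-)$ for free, since $h_+$ alone is already injective and immersive. The paper's sequential scheme is inherited from the proof of Theorem~\ref{thm:isometric embedding Minkowski space}, where the interaction with subdivisions makes it the natural choice; in the smooth setting your parallel variant is the cleaner one.
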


\vskip 2pt

Note that in Theorem \ref{theorem:Nash-Kuiper into Minkowski space} there are absolutely no conditions on the signature of the metric $g$.  
In particular, $g$ could be degenerate.

\begin{remark}
The results in this paper were developed during the author's work in \cite{Minemyer4}.  
These results ended up not being used in \cite{Minemyer4}, but the author felt that they were interesting in their own right.  
The proof's are not too difficult though and could even be considered applications of Krat/Akopyan's Theorem \ref{Akopyan} and the Nash-Kuiper $C^1$-isometric embedding Theorem.  
The author's opinion is that the results stated here are more interesting than the techniques used in the proofs.
\end{remark}

\begin{remark}
Even though Theorem \ref{thm:isometric embedding Minkowski space}, Corollary \ref{cor:isometry Minkowski space}, and Theorem \ref{theorem:Nash-Kuiper into Minkowski space}
above deal with maps into Minkowski space $\R^{p,q}$, the metric on the set of functions is always defined using the Euclidean metric on $\R^{p+q}$.  
To avoid confusion, in this paper the use of straight brackets $| \cdot |$ will always denote the Euclidean norm.  
\end{remark}

This paper is ordered as follows.  
In Section \ref{section: preliminaries} we discuss an array of preliminary topics, including Akopyan's Theorem \ref{Akopyan}, Minkowski space, and quadratic forms associated to indefinite metric polyhedra.  
Then  in Section \ref{section: proofs} we prove 
Theorem \ref{thm:isometric embedding Minkowski space}, Corollary \ref{cor:isometry Minkowski space}, and Theorem \ref{theorem:Nash-Kuiper into Minkowski space}.


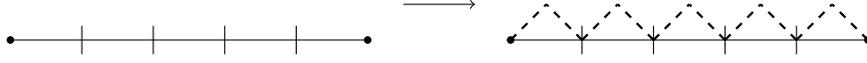
\begin{figure}
\begin{center}
\begin{tikzpicture}[scale=0.95]

\draw (0,0) -- (5,0);
\draw[fill=black!] (0,0) circle (0.3ex);
\draw[fill=black!] (5,0) circle (0.3ex);
\draw (1,-0.2) -- (1,0.2);
\draw (2,-0.2) -- (2,0.2);
\draw (3,-0.2) -- (3,0.2);
\draw (4,-0.2) -- (4,0.2);

\draw[->] (5.5,0.5) -- (6.5,0.5);

\draw (7,0) -- (12,0);
\draw[line width=0.3mm,dashed] (7,0) -- (7.5,0.5);
\draw[line width=0.3mm,dashed] (8,0) -- (7.5,0.5);
\draw[line width=0.3mm,dashed] (8,0) -- (8.5,0.5);
\draw[line width=0.3mm,dashed] (9,0) -- (8.5,0.5);
\draw[line width=0.3mm,dashed] (9,0) -- (9.5,0.5);
\draw[line width=0.3mm,dashed] (10,0) -- (9.5,0.5);
\draw[line width=0.3mm,dashed] (10,0) -- (10.5,0.5);
\draw[line width=0.3mm,dashed] (11,0) -- (10.5,0.5);
\draw[line width=0.3mm,dashed] (11,0) -- (11.5,0.5);
\draw[line width=0.3mm,dashed] (12,0) -- (11.5,0.5);
\draw[fill=black!] (7,0) circle (0.3ex);
\draw[fill=black!] (12,0) circle (0.3ex);
\draw (11,-0.2) -- (11,0.2);
\draw (8,-0.2) -- (8,0.2);
\draw (9,-0.2) -- (9,0.2);
\draw (10,-0.2) -- (10,0.2);


\end{tikzpicture}
\end{center}
\caption{Approximating a short path by a nearby longer path (dashed).}
\label{seventhfig}
\end{figure}

\vskip 20pt

\section{Minkowski space, quadratic forms, and the Krat/Akopyan Theorem}\label{section: preliminaries}

\subsection{Minkowski space $\Rpq$}

\emph{Minkowski space of signature $(p, q)$}, denoted by $\R^{p,q}$, is $\R^{p + q}$ endowed with the symmetric bilinear form of signature $(p,q)$.  More specifically, if $\vec{v}, \vec{w} \in \R^{p,q}$ with $\vec{v} = (v_i)_{i = 1}^{p + q}$ and $\vec{w} = (w_i)_{i = 1}^{p + q}$, then
	\begin{equation*}
	\langle \vec{v} , \vec{w} \rangle_{\R^{p,q}} := \langle \vec{v} , \vec{w} \rangle := \sum_{i = 1}^p v_i w_i - \sum_{j = p + 1}^{p + q} v_j w_j .
	\end{equation*}

The use of $\Rpq$ will specifically mean $\R^{p + q}$ endowed with the symmetric bilinear form of signature $(p,q)$, $\mathbb{E}^N$ will mean $\R^N$ with the symmetric bilinear form of signature $(N,0)$, and $\R^N$ will mean to include the possibility of \emph{any} Minkowski inner product of signature $(p',q')$ such that $p' + q' = N$.

\vskip 20pt

\subsection{Quadratic forms associated to indefinite metric polyhedra}

Let $(\X, \T, g)$ be an {\it indefinite metric polyhedron}.  
This just means that $\X$ is a topological space, $\T$ is a locally finite simplicial triangulation of $\X$, and $g$ is a function which assigns a real number to each edge of $\T$.
This function $g$ defines a unique indefinite metric over each simplex $\s \in \T$, and thus over all of $\X$, as follows.

Let $\s = \langle v_0, v_1, ..., v_k \rangle \in \T$ be a $k$-dimensional simplex.
Embed $\s$ into $\R^k$ by identifying $v_0$ with the origin, and for $1 \leq i \leq k$ identifying $v_i$ with the terminal point of the $i^{th}$ standard basis vector.
Let $\vec{w}_i := v_i - v_0$ denote the $i^{th}$ standard basis vector, and let $e_{ij}$ denote the edge in $\s$ between the vertices $v_i$ and $v_j$.  

The indefinite metric $g$ (and our choice of ordering of the vertices of $\s$) defines a quadratic form $G$ on $\R^k$ as follows.  Define
	\begin{align*}
	G(w_i) &= s(g (e_{0i}))  \\
	G(w_i - w_j) &= s(g(e_{ij}))
	\end{align*}
where
	\begin{equation*}
	\ds{s(x) = \left\{ \begin{array}{rl} x^2 & \quad \text{if } x \geq 0 \\ -x^2 & \quad \text{if } x < 0 \end{array} \right.  }
	\end{equation*} 
is the {\it signed squared} function.
Let $\la , \ra_g$ denote the symmetric bilinear form associated to $G$.  
A simple calculation, worked out in \cite{Minemyer2}, shows that
	\begin{equation}\label{definition of quadratic form}
	\langle \vec{w}_i , \vec{w}_j \rangle_g \, = \frac{1}{2} \left( G(\vec{w}_i) + G(\vec{w}_j) - G(\vec{w}_i - \vec{w}_j) \right).   
	\end{equation}
So $G$ is completely determined by the above definition, which is sometimes called the {\it polarization identity} of $G$.  
We will abuse notation and refer to $G$ as a quadratic form on $\s$, when rigorously $G$ is really a quadratic form on $\R^k$.  

Given a quadratic form $G$ on $\s$ as above, define the {\it energy} of an edge $e$ to simply be $G(e)$.  
Equation \eqref{definition of quadratic form} shows that a quadratic form is uniquely determined by the energy that it assigns to each edge.  
Thus, the set of quadratic forms on a $k$-dimensional simplex $\s$ can naturally be identified with $\R^n$ where $n = $  $k+1 \choose 2$.  
Each coordinate in $\R^n$ is naturally parameterized by the energy of the corresponding edge of $\s$.  

Now let $f: \X \rightarrow \R^{p,q}$ be any continuous function, where $\R^{p,q}$ denotes Minkowski space of signature $(p,q)$.  
Let $\s$ be as above.  
The map $f$ determines a unique indefinite metric $g_f$ on $(\X, \T)$ by defining
	\begin{equation}\label{definition of induced metric}
	g_f (e_{ij}) := \la f(v_i) - f(v_j) , f(v_i) - f(v_j) \ra
	\end{equation}
where $v_i$ and $v_j$ are the vertices incident with $e_{ij}$, and where $\la , \ra$ is the Minkowski bilinear form on $\Rpq$.  
The indefinite metric $g_f$ induces a quadratic form $G_f$ on $\R^k$ just as above, called the {\it induced quadratic form} of $f$.  
The map $f$ is a {\it simplicial isometry} if it is simplicial over $\T$ (meaning that it is linear on each simplex of $\T$) and if it satisfies that $G_f(\s) = G(\s)$ for all $\s \in \T$.  
We say that $f$ is a {\it piecewise linear isometry} (or {\it pl isometry}) of $\X$ into $\Rpq$ if $f$ is piecewise linear (meaning that it is simplicial on some subdivision $\T'$ of $\T$) and if is a simplicial isometry with respect to $\T'$.  
The map $f$ is a {\it pl isometric embedding} (respectively a {\it simplicial isometric embedding}) if in addition to being a pl isometry (respectively a simplicial isometry) it is also an embedding.

We say that an indefinite metric polyhedron $(\X, \T, g)$ is {\it Euclidean} if the quadratic form $G(\s)$ induced by $g$ on $\s \in \T$ is positive definite for all $\s \in \T$.  
So in some sense, Euclidean polyhedra are combinatorial analogues to Riemannian manifolds.
It is well known that the collection of positive definite quadratic forms is closed under addition and positive scalar multiplication.  
Thus, they form an open cone within the collection of all indefinite metric polyhedra, an observation which was also pointed out by Rivin in \cite{Rivin}.

\vskip 20pt

\subsection{Splitting of $G_f$}
Let $f: \X \rightarrow \Rpq$ be a simplicial map.  
Write $f = f_1 \oplus f_2$ where the ``$\oplus$" denotes the {\it concatenation} of $f_1$ and $f_2$.
So $f_1 : \X \rightarrow \R^a$ and $f_2 : \X \rightarrow \R^b$ for some integers $a$ and $b$ where $a + b = p + q$.  
Let $e_{ij}$ denote the edge between vertices $v_i$ and $v_j$.  
Then, using superscripts to denote the component functions of $f$, $f_1$, and $f_2$:
	\begin{align*}
	g_f^2 (e_{ij}) &= \la f(v_i) - f(v_j), f(v_i) - f(v_j) \ra  \\
	&= \sum_{k = 1}^{p+q} \eta(k) (f^k(v_i) - f^k(v_j))^2  \\
	&= \sum_{k=1}^a \eta(k) (f^k_1 (v_i) - f^k_1(v_j))^2 + \sum_{k=a+1}^{a+b} \eta(k) (f^k_2 (v_i) - f^k_2(v_j))^2  \\
	&= g_1^2 (e_{ij}) + g_2^2 (e_{ij})
	\end{align*}
where $\eta(k) = \pm 1$ depending on the respective coordinate, and where $g_1$ and $g_2$ denote the indefinite metrics induced by $f_1$ and $f_2$, respectively.

Combining the above with equations \eqref{definition of quadratic form} and \eqref{definition of induced metric} yields
	\begin{equation}\label{splitting of quadratic form}
	G_f = G_f^1 + G_f^2
	\end{equation}
where $G_f^1$ and $G_f^2$ are the quadratic forms induced by $f_1$ and $f_2$, respectively.

\vskip 20pt

\subsection{Akopyan's Theorem}  In this Subsection we provide some necessary terminology and then formally state Akopyan's result, which is the key ingredient in proving Theorem \ref{thm:isometric embedding Minkowski space} and Corollary \ref{cor:isometry Minkowski space}.  
The statement provided here is slightly more general than what is in \cite{Akopyan}, but only applies to {\it Euclidean} polyhedra.  
The proof goes through nearly unchanged, and can be found in \cite{Akopyan} (in Russian).
An English proof can be found in \cite{Minemyer thesis}, and the case when $n=2$ can be found in \cite{PY}.

Let $(\X, \T)$ be a polyhedron (that is, a topological space $\X$ with a locally finite triangulation $\T$) and let $x \in \X$.  
For a vertex $v$, the closed star of $v$ will be denoted by $St(v)$.  
We define $St^2(v) := \bigcup_{u \in St(v)} St(u)$ and for any $k \in \mathbb{N}$ we recursively define $St^{k + 1}(v) := \bigcup_{u \in St^k(v)} St(u)$.  
Then define the \emph{$k^{th}$ shell about $x$}, denotes by $Sh^k(x)$, as:
	\begin{enumerate}
	\item $Sh^1(x) = St(x)$
	\item $Sh^k(x) = St^k(x) \setminus St^{k - 1}(x)$ for $k \geq 2$
	\end{enumerate}

Notice that $Sh^k(x) \cap Sh^l(x) = \emptyset$ for $k \neq l$, and that $\bigcup_{k = 1}^{\infty} Sh^k(x) = \X$.  
So the collection of shells partitions $\X$.  
Note that it is certainly possible for $Sh^k(x) = \emptyset$ in the presence of nontrivial homology, in which case $Sh^l(x) = \emptyset$ for all $l \geq k$.  
Also notice that $St^k(x)$ and $Sh^k(x)$ both depend on the triangulation that is being considered.  If the triangulation is to be emphasized, then it will be put as a subscript.  So $St^k_{\T}(x)$ and $Sh^k_{\T}(x)$ denote the $k^{th}$ closed star and the $k^{th}$ shell of $x$ with respect to $\T$, respectively.  

The following Theorem was proved by Krat in \cite{Krat} for the case when $n=2$, and then for general dimensions by Akopyan in \cite{Akopyan}.

\vskip 10pt
	
\begin{theorem}[Krat \cite{Krat}, Akopyan \cite{Akopyan}]\label{Akopyan}
Let $(\X, \T, g)$ be an $n$-dimensional Euclidean polyhedron with vertex set $\V$ and let $\{ \e_i \}_{i = 1}^{\infty}$ be a sequence of positive real numbers converging monotonically to $0$.  Let $f:\X \rightarrow \mathbb{E}^N$ be a short map with $N \geq n$ and fix a vertex $v \in \V$.  Then there exists a pl isometry $h:\X \rightarrow \mathbb{E}^N$ such that for any $k \in \mathbb{N}$ and for any $x \in Sh^k(v)$, $|f(x) - h(x)| < \e_k$.
\end{theorem}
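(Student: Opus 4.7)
The plan is to build $h$ as the $C^0$-limit of a sequence of strictly short piecewise linear maps $f_0, f_1, f_2, \ldots$, each simplicial on a progressively finer subdivision $\T_k$ of $\T$, so that on every edge the metric deficit $g(e)^2 - g_{f_k}(e)^2$ decays geometrically to $0$. Shell-control is enforced by a budget: for each shell $Sh^j(v)$ fix $\delta_k^{(j)} > 0$ with $\sum_k \delta_k^{(j)} < \e_j$, and arrange $|f_{k+1}(x) - f_k(x)| < \delta_k^{(j)}$ on $Sh^j(v)$. Then summability of the $\delta_k^{(j)}$ will give the final shell-controlled $C^0$ estimate.

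First I would produce $f_0$ by subdividing $\T$ finely enough that the linear interpolation of $f$ at the vertices lies within $\e_j/2$ of $f$ on $Sh^j(v)$; a small uniform contraction then makes it strictly short. The atomic step takes $f_k$ and builds $f_{k+1}$ by working simplex-by-simplex, inductively on the dimension of faces so that modifications agree on shared faces. On each simplex $\s$, one subdivides and replaces $f_k|_\s$ by a pl map that extrinsically lengthens every edge by a fixed fraction of its remaining deficit while remaining short and while displacing each point by less than $\delta_k^{(j)}$. The standard tool is a zig-zag corrugation as in Figure \ref{seventhfig}, inserted along a direction of $\E^N$ with room to absorb the oscillation; the codimension hypothesis $N \geq n$ guarantees such a direction exists at each interior point of $\s$. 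The geometric decay of the deficit yields $g_h = g$ in the limit, local finiteness of $\T$ ensures the subdivisions stabilize on every compact piece so that $h$ is pl on a subdivision of $\T$, and summability gives $|h(x) - f(x)| < \e_j$ on $Sh^j(v)$.

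The hard part is the atomic length-addition step in dimensions $n \geq 2$: simultaneously lengthening every edge of an $n$-simplex $\s$ by a uniform fraction of its deficit, while respecting the already-constructed pl isometry on $\partial \s$ (needed for compatibility across shared faces) and while preserving shortness throughout. For $n = 1$ this is the elementary zig-zag of Figure \ref{seventhfig}; for $n = 2$ Krat's explicit folding construction suffices; for general $n$, Akopyan's intricate simplicial-corrugation scheme is required. The crucial analytical input is that the fraction of deficit removed per iteration is bounded below \emph{independently} of the geometry of $\s$, so that the cumulative deficit shrinks as a convergent geometric series and the limit is an exact pl isometry on a common subdivision.
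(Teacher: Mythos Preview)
Your proposal has a genuine gap in the passage from the sequence $(f_k)$ to the limit $h$. You assert that ``local finiteness of $\T$ ensures the subdivisions stabilize on every compact piece so that $h$ is pl on a subdivision of $\T$,'' but this does not follow from anything you have set up. Each corrugation step refines the triangulation, and since the deficit shrinks only by a fixed fraction per iteration you need \emph{infinitely} many steps on every simplex with positive initial deficit. The subdivisions $\T_k$ therefore do not stabilize on any such simplex, and the $C^0$-limit $h$, while a path isometry, has no reason to be piecewise linear on any subdivision of $\T$. This is exactly the distinction between the Nash--Kuiper mechanism (an infinite iteration producing a $C^1$ or $C^0$ limit) and the Krat--Akopyan theorem (pl output): a geometric-series iteration of the type you describe lands in the former regime, not the latter.

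The argument the paper has in mind avoids this by being a \emph{finite} construction, inducting on skeleta and using Brehm's extension theorem rather than iterated corrugation. After passing to a strictly short simplicial map on a fine subdivision $\T'$, one builds a pl map $h_1$ on the $1$-skeleton that is an exact path isometry on each edge (a single zig-zag, not an iterated approximation) while remaining globally short. Brehm's theorem --- that any short map between finite point sets in $\E^N$ extends to a pl isometry of $\E^N$ --- then extends $h_1$ simplex-by-simplex to a pl isometry $h_2$ on the $2$-skeleton, and inductively one reaches $h_n$ after exactly $n$ steps. The subtle point is that $h_i$ can fail to be short near the $(i-1)$-skeleton, which would block the next application of Brehm; this is handled by precomposing with a pl projection $p_\lambda$ collapsing thin neighborhoods of codimension-$\geq 2$ faces and then restoring strict shortness by a small contraction. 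Because the process terminates in $n$ steps, the output is genuinely pl. If you want to salvage your outline, the fix is to replace the geometric-decay iteration by a scheme that achieves \emph{exact} isometry on the $i$-skeleton at stage $i$; that is essentially the Brehm-based induction.
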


\vskip 8pt

The slight difference between Theorem \ref{Akopyan} and what is contained in \cite{Akopyan} is that Theorem \ref{Akopyan} allows the $\e$-approximation to decrease to zero as you 
move farther and farther away from some fixed point.  
This allows us the cut one dimension off of the codimension requirements in Theorem \ref{thm:isometric embedding Minkowski space}.  
But if in Theorem \ref{thm:isometric embedding Minkowski space} one only requires that $p + q \geq 3n+1$ then Akopyan's original result from \cite{Akopyan} is sufficient.  

\vskip 10pt
	  

\vskip 20pt

\subsection{Akopyan's Theorem in terms of quadratic forms}
Let $P$ and $Q$ denote two quadratic form on $\R^k$.  
Recall that the notation $P < Q$ means that $P(v) < Q(v)$ for all $v \in \R^k$, and similarly for $\leq$.  
Given an indefinite metric polyhedron $(\X, \T, g)$ and a simplicial map $f: \X \rightarrow \Rpq$, 
we say that $f$ is {\it short}, or {\it 1-Lipschitz}, if $G_f \leq G$ on every simplex of $\T$, and $f$ is {\it strictly short} if $G_f < G$ for all simplices in $\T$.  
Note that, if $\X$ is a Euclidean polyhedron, then this definition of 1-Lipschitz is equivalent to the usual definition for a metric space.
This definition is also equivalent to how we used the term ``short" in the Introduction and in Krat/Akopyan's Theorem \ref{Akopyan}, but is now slightly generalized to include indefinite metrics.

When proving Theorem \ref{thm:isometric embedding Minkowski space} it will be useful to have a version of Krat/Akopyan's Theorem \ref{Akopyan} for negative-definite metrics.
The next statment is just a reworded version of Theorem \ref{Akopyan} for the negative-definite setting.

\vskip 10pt
	
\begin{theorem}[Krat/Akopyan's Theorem for negative-definite polyhedra]\label{negative-definite Akopyan}
Let $(\X, \T, g)$ be an $n$-dimensional indefinite metric polyhedron with vertex set $\V$ and associated quadratic form $G$.  
Let $f:\X \rightarrow \mathbb{R}^{0,N}$ be a continuous map with associated quadratic form $G_f$.
Assume that $G_f \geq G$ (which necessarily implies that $G \leq 0$, i.e. that $G$ is negative-definite).
Let $\{ \e_i \}_{i = 1}^{\infty}$ be a sequence of positive real numbers, assume $N \geq n$, and fix a vertex $v \in \V$. 
Then there exists a pl isometry $h:\X \rightarrow \mathbb{R}^{0,N}$ such that for any $k \in \mathbb{N}$ and for any $x \in Sh^k(v)$, $|f(x) - h(x)| < \e_k$.
\end{theorem}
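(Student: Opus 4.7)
The plan is to reduce the statement directly to Krat/Akopyan's Theorem~\ref{Akopyan} by flipping signs on the target. The space $\R^{0,N}$ is, as a set, $\R^N$ equipped with the \emph{negative} of the usual Euclidean form, so any continuous map $f:\X \to \R^{0,N}$ can be reinterpreted as the same set-theoretic map $\tilde f : \X \to \E^N$. Under this reinterpretation every target inner product flips sign, so by the definition of the induced metric \eqref{definition of induced metric} we have $g_{\tilde f} = -g_f$, and via the polarization identity \eqref{definition of quadratic form} the induced quadratic forms satisfy $G_f = -\tilde G_{\tilde f}$. In parallel, let $\tilde g$ denote the edge function whose associated quadratic form is $\tilde G := -G$. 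Because $G$ is assumed negative definite on every simplex, $\tilde G$ is positive definite there, so $(\X, \T, \tilde g)$ is a genuine Euclidean polyhedron.

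With these reinterpretations, the hypothesis $G_f \geq G$ becomes the statement $\tilde G_{\tilde f} \leq \tilde G$, that is, $\tilde f$ is short with respect to the Euclidean polyhedron $(\X, \T, \tilde g)$. To apply Theorem~\ref{Akopyan} we also need the $\e$-sequence to decrease monotonically to zero, so I would first replace $\{\e_i\}$ by $\tilde\e_i := \min(\e_1, \ldots, \e_i, 2^{-i})$, which is monotonically decreasing to $0$ and still satisfies $\tilde\e_i \leq \e_i$. Theorem~\ref{Akopyan} then produces a pl isometry $\tilde h : \X \to \E^N$ of $(\X, \T, \tilde g)$ with $|\tilde f(x) - \tilde h(x)| < \tilde\e_k \leq \e_k$ for every $x \in Sh^k(v)$.

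Finally I reinterpret $\tilde h$ as a map $h : \X \to \R^{0,N}$. The same sign-flip gives $G_h = -\tilde G_{\tilde h} = -\tilde G = G$, so $h$ is a pl isometry of $(\X, \T, g)$ into $\R^{0,N}$. The distance estimate transfers verbatim, since by the paper's convention $|\cdot|$ always denotes the Euclidean norm on $\R^{p+q}$ independently of the signature imposed on the target. The only genuinely delicate point is the bookkeeping in the first paragraph, namely checking that negating the target bilinear form negates $G_f$ as a quadratic form (and not, say, something involving the signed-square function $s$ in a sign-inconsistent way); once \eqref{definition of quadratic form} and \eqref{definition of induced metric} are taken together this is immediate, and the remainder is purely an invocation of Theorem~\ref{Akopyan}.
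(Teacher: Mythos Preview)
Your proposal is correct and is exactly the sign-flip reduction the paper has in mind when it says this is ``just a reworded version of Theorem~\ref{Akopyan} for the negative-definite setting''; the paper gives no further proof. You even handle a small point the paper glosses over, namely that Theorem~\ref{Akopyan} asks for the $\e_i$ to decrease monotonically to $0$ while the present statement does not, and your replacement $\tilde\e_i = \min(\e_1,\dots,\e_i,2^{-i})$ neatly closes that gap.
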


\vskip 20pt

\section{Proofs of Theorem \ref{thm:isometric embedding Minkowski space}, Corollary \ref{cor:isometry Minkowski space}, and Theorem \ref{theorem:Nash-Kuiper into Minkowski space}.}\label{section: proofs}

\begin{proof}[Proof of Theorem \ref{thm:isometric embedding Minkowski space}]

Let $(\X, \T, g)$ be an $n$-dimensional indefinite metric polyhedron, and let $N := p + q$.
Since $f$ can be approximated arbitrarily closely by a pl map, by passing to a subdivision of $\T$ (which may be finer and finer as we move away from $v$) we may assume that $f$ is simplicial with respect to $\T$.  

Let $G$ and $G_f$ denote the symmetric bilinear forms determined by the metric $g$ and the function $f$, respectively.  
Write 
	\begin{equation}\label{splitting f}
	f = f^+ \oplus f^* \oplus f^-
	\end{equation}
where
	\begin{align*}
	&f^+ : \X \rightarrow \R^{n,0} \; \text{ with associated quadratic form } G_f^+  \\
	&f^* : \X \rightarrow \R^{p-n,q-n}  \; \text{ with associated quadratic form } G_f^* \\
	&f^- : \X \rightarrow \R^{0,n} \; \text{ with associated quadratic form } G_f^-.
	\end{align*}
By equation \eqref{splitting of quadratic form} we know that $G_f = G_f^+ + G_f^* + G_f^-$.  

Since $p + q \geq 3n$, we have that $(p-n) + (q-n) \geq n$.  
So the target spaces of each of the three maps on the right hand side of equation \eqref{splitting f} contain at least $n$ dimensions.  
By perturbing the vertices of $f(\X)$ into general position one coordinate at a time, we may assume both that $f$ is an embedding 
and that $f^+ \oplus f^*$ is an embedding when restricted to the closed star of any vertex (furthermore called a {\it local embedding}).
For the full details of this argument, please see the proof of Theorem 1.2 (1) from \cite{Minemyer1}.

We now want to construct a quadratic form $H$ on $\T$ that satisfies the following two properties
	\begin{equation}\label{H < G}
	H < G
	\end{equation}
and
	\begin{equation}\label{H < G_f}
	H < G_f .
	\end{equation}
If $\X$ is compact then we simply scale the identity metric on $\X$ (the metric which gives every edge a length of 1) by a large (in absolute value) negative number to obtain $H$.  
If $\X$ is not compact then we fix $v$ in the vertex set of $\T$ and scale the edges in $Sh^k(v)$ sequentially by (possibly) larger and larger negative numbers.
It is possible that, when going from $Sh^k(v)$ to $Sh^{k+1}(v)$, the increase in size of the scaling factor will be too large so that one (or both) of $G-H$ or $G_f - H$ is not positive definite.  
To remedy this, we take a very find subdivision of $Sh^{k+1}(v) \setminus Sh^k(v)$ and gradually increase the scale of the edges as we move away from $Sh^k(v)$.  

Equation \eqref{H < G_f} gives
	\begin{equation*}
	G_f^+ + G_f^* + G_f^- = G_f > H \qquad \Longrightarrow \qquad G_f^- > H - G_f^+ - G_f^*.
	\end{equation*}
So we may apply the negative-definite version of Akopyan's Theorem (Theorem \ref{negative-definite Akopyan}) 
to obtain a pl map $h^- : \X \rightarrow \R^{0,n}$ with associated 
quadratic form $G_h^-$ that satisfies
	\begin{equation}\label{H equality}
	G_h^- = H - G_f^+ - G_f^*
	\end{equation}
over all simplices of some subdivision $\T^\prime$ of $\T$, and is as precise of an approximation to $f^-$ as we require within $Sh^k(v)$.  

To see how precise we need to approximate $f^-$, consider the collection 
	\begin{equation*}
	\{ st(p) | p \in Sh^k_{\T}(v) \}
	\end{equation*} 
where $st(p)$ denotes the open star of $p$ with respect to $\T$.  
Since $\T$ is locally finite, there exists a finite subset of this collection that covers $Sh^k_{\T}(v)$.  
This finite collection has a Lebesgue number which we will denote $\d_k > 0$.  
Let 
	\begin{equation*}
	\Delta_k = \{ (x, x) | x \in Cl(Sh^k_{\T}(v)) \}
	\end{equation*}
denote the diagonal of $Cl(Sh^k_{\T}(v)) \times Cl(Sh^k_{\T}(v))$ (where $Cl$ denotes the \emph{closure}), 
and let $b(\Delta_k, \d_k)$ denote the open neighborhood of radius $\d_k$ of $\Delta_k$.  
Then $b(\Delta_k, \d_k)^C$ is a closed subset of $Cl(Sh^k_{\T}(v)) \times Cl(Sh^k_{\T}(v))$ and is therefore compact.
Consider the function $\psi_k: b(\Delta_k, \d_k)^C \rightarrow \mathbb{R}$ defined by $\psi_k(x,y) := |f(x) - f(y)|_{\mathbb{E}^N}$.
The map $\psi_k$ is positive over all of $b(\Delta_k, \d_k)^C$ since $f$ is an embedding.
Then since $b(\Delta_k, \d_k)^C$ is compact, there exists $\mu_k > 0$ such that $\psi_k(x, y) > \mu_k$ for all $(x, y) \in b(\Delta_k, \d_k)^C$.

We obtain $h^-$ by applying Theorem \ref{negative-definite Akopyan} to $f^-$ with $\e_k := \frac{\mu_k}{3} $ accuracy within $Sh^k_{\T}(v)$.
Let $f^\prime := f^+ \oplus f^* \oplus h^-$.  
By the choice of $\e_k$, $f^\prime(x) \neq f^\prime(y)$ for any $(x, y) \in b(\Delta_k, \d_k)^C$.  
Also, $f^\prime(x) \neq f^\prime(y)$ for any $(x, y) \in b(\Delta_k, \d_k)$ since $f^+ \oplus f^*$ is injective on the $\d_k$ neighborhood of every point.
Thus, this new map $f^\prime$ is still injective.

Now, by equation \eqref{H < G} we have that
	\begin{equation}\label{short inequality}
	G > H = G_f^+ + G_f^* + G_h^-  \qquad \Longrightarrow \qquad  G - G_f^* - G_h^- > G_f^+.
	\end{equation}
In the exact same way as above, we may perturb the vertices of $f^*$ and $h^-$ so that $f^* \oplus h^-$ is a local embedding 
while maintaining both the inequality on the right hand side of equation \eqref{short inequality} and the fact that $f^\prime$ is a global embedding.  

We now apply Theorem \ref{Akopyan} to obtain a map $h^+ : \X \rightarrow \R^{n,0}$ 
with associated quadratic form $G_h^+$ that satisfies
	\begin{equation}\label{isometry}
	G - G_f^* - G_h^- = G_h^+  \qquad \Longrightarrow \qquad G = G_h^+ + G_f^* + G_h^-
	\end{equation}
over all simplices of some subdivision $\T^{\prime \prime}$ of $\T^\prime$.  
Using the exact same argument as above, we can choose $h^+$ to be a close enough approximation to $f^+$ so that the map $h := h^+ \oplus f^* \oplus h^-$ is still an embedding.  
Then by the right hand side of equation \eqref{isometry}, we see that $h$ is the desired isometric embedding which is a suitable approximation of $f$.  
	
\end{proof}

\vskip 20pt

\begin{proof}[Proof of Corollary \ref{cor:isometry Minkowski space}] 

In the proof of Theorem \ref{thm:isometric embedding Minkowski space}, we first apply Theorem \ref{negative-definite Akopyan} to the map $f^-$ 
and then apply Theorem \ref{Akopyan} to the map $f^+$.  
The purpose of $f^*$ is to ensure that we have enough coordinates so that the maps $f^+ \oplus f^*$ and $f^* \oplus h^-$ can be perturbed to be local embeddings.  
Then each time we apply Akopyan's Theorem we can ensure that the total map is still an embedding.  
But for Corollary \ref{cor:isometry Minkowski space} we are not concerned with the map $h$ being an embedding, and so the map $f^*$ can be removed.  
This yields the appropriate amount of coordinates for Corollary \ref{cor:isometry Minkowski space}.

\end{proof}

\vskip 20pt

\begin{proof}[Proof of Theorem \ref{theorem:Nash-Kuiper into Minkowski space}]
Let $(M, G)$ denote an $n$-manifold with a metric tensor $G$ of any signature, 
and let $f: M \rightarrow \Rpq$ be any continuous map with $p, q \geq 2n$.  
Since there are at least $4n$ ambient dimensions, by Whitney we may assume that $f$ is a smooth embedding.
Note that we are using a capital $G$ instead of a lowercase $g$ as is used in the statement of Theorem \ref{theorem:Nash-Kuiper into Minkowski space} in order to be consistent with the notation in the proof of Theorem \ref{thm:isometric embedding Minkowski space}

Just as above, we decompose $f = f^+ \oplus f^-$ 
where $f^+: M \rightarrow \R^{p,0}$ and $f^- : M \rightarrow \R^{0,q}$.  
To remain consistent with notation, let $G_f$, $G_f^+$, and $G_f^-$ denote the pullback metrics induced by $f$, $f^+$, and $f^-$, respectively.   
It is well known (for example, see \cite{Nash2} or \cite{Greene}) that $G_f = G_f^+ + G_f^-$.  
Also, since the codomains of both $f^+$ and $f^-$ contain at least $2n$ dimensions, by Whitney we may assume that both maps are immersions.

Just as before, we construct a quadratic form $H$ on $M$ such that both $H < G$ and $H < G_f$.  
If $M$ is compact then we can simply obtain $H$ by scaling $Q$, the Euclidean quadratic form on $\R^{p+q}$, by a suitably large negative number.  
For $M$ non-compact essentially the same construction works.
Let $\{ C_i \}_{i=1}^{\infty}$ be a compact exhaustion of $M$, i.e. $\cup_{i=1}^\infty C_i = M$ and $C_i \subseteq C_{i+1}$ for all $i$.
Let $\a_i$ be a negative constant such that $\a_i < \a_{i-1}$, $\a_i Q < G$, and $\a_i Q < G_f$ all within $C_{i+1}$.
Then we require that $H \leq \a_i Q$ when restricted to the boundary $C_i$, and we use a smooth partition of unity to vary the quadratic form within $C_{i+1} \setminus C_i$.

Now that we have this form $H$, we proceed in exactly the same way as in the proof of Theorem \ref{thm:isometric embedding Minkowski space}.  
We again have that 
	\begin{equation*}
	G_f^+ + G_f^- = G_f > H	\qquad	\Longrightarrow		\qquad	G_f^- > H - G_f^+.
	\end{equation*}
and we can apply the Nash-Kuiper Theorem (in the negative-definite setting) to obtain a $C^1$-map $h^- :M \to \R^{0,q}$ such that $G_h^- = H - G_f^+$.
Two remarks:
	
\n (1)  In the construction of the Nash-Kuiper Theorem, the map $h^-$ is obtained as the limit of smooth maps whose induced metric converges to that of $h^-$.  
	So we may really assume that $h^-$ is a smooth map whose induced metric $G_h^-$ satisfies
		\begin{equation*}
		G_h^- \approx H - G_f^+		\qquad	\Longrightarrow	\qquad	G_f^+ + G_h^- \approx H
		\end{equation*}
	and where this approximation is as close as we like.
	
	\vskip 10pt
		
\n (2)  In order to apply the Nash-Kuiper $C^1$-isometric embedding Theorem to $f^-$, we need a unit normal vector field $\eta: f^-(M) \rightarrow \R^q$ 
	(see pg. 551 of \cite{Kuiper}).  
	If $f^-$ happened to be an embedding (which it may not be), then choosing fine enough iterations of this process would ensure that $h^-$ were also an embedding.
	But, clearly, the map $\eta \oplus \vec{0} : f(M) \rightarrow \Rpq$ is also a unit normal vector field to the image of $f$.  
	Then since $f = f^+ \oplus f^-$ is an embedding, applying small enough iterations of the Nash-Kuiper process (with respect to $\eta$) preserves the fact that $f^+ \oplus h^-$ is an embedding. 
	
\vskip 10pt
	
Now, just as above we have that
	\begin{equation*}
	G > H \approx G_f^+ + G_h^-	\qquad	\Longrightarrow		\qquad	G_f^+ < G - G_h^-.
	\end{equation*}
So we again apply the Nash-Kuiper $C^1$-isometric embedding Theorem to obtain a $C^1$ map $h^+ : M \rightarrow \R^{p,0}$ with associated quadratic form $G_h^+$ so that
	\begin{equation*}
	G_h^+ = G - G_h^-	\qquad	\Longrightarrow		\qquad	G = G_h^+ + G_h^- = G_h
	\end{equation*}
where $h = h^+ \oplus h^-$.  
By the same considerations as above we have that $h$ is an embedding, and is thus our desired $C^1$-isometric embedding.

\end{proof}

\begin{remark}
We needed both $p,q \geq 2n$ in Theorem \ref{theorem:Nash-Kuiper into Minkowski space} to ensure that both $f^+$ and $f^-$ could be perturbed to be immersions.  
But if either map is already an immersion to begin with, then we do not need such high codimension.  
In particular, the dimension requirements could be as low as $p,q \geq n+1$.  
Note that this guarantees at least $2n+2$ ambient dimensions, so there is still no issue with perturbing the total map $f$ to be an embedding.
\end{remark}

\vskip 10pt


\begin{thebibliography}{XXXXX}

\bibitem[Ako07]{Akopyan} A.V. Akopyan, \textit{PL-analogue of Nash-Kuiper theorem}, preliminary version (in Russian): \\ 
http://www.moebiuscontest.ru/files/2007/akopyan.pdf  \\ www.moebiuscontest.ru


\bibitem[BBI01]{BBI} D. Burago, Y. Burago, S. Ivanov, \textit{A course in metric geometry},
Graduate Studies in Mathematics {\bf 33}, American Mathematical Society, Providence, RI, 2001.

\bibitem[BZ96]{BZ} Y.D. Burago and V.A. Zalgaller, \textit{Isometric piecewise linear immersions of two-dimensional manifolds with polyhedral metrics into $\mathbb{R}^3$},
St. Petersburg Math. J., {\bf 7}(1996), no. 3, 369-385.

\bibitem[Bha07]{Bhatia} R. Bhatia, \textit{Positive Definite Matrices}, 
Princeton Series in Applied Mathematics, 2007, 2-3.


\bibitem[EM02]{EM} Y. Eliashberg and N. Mishachev, \textit{Introduction to the $h$-principle},
American Mathematical Society, Providence, RI, 2002.

\bibitem[GZ15]{GZ} P. Galashin and V. Zolotov, \textit{Extensions of isometric embeddings of pseudo-Euclidean metric polyhedra},
preprint, arXiv: 1501.05037 (2015).

\bibitem[Gre70]{Greene} R.E. Greene, \textit{Isometric Embeddings of Riemannian And Pseudo-Riemannian Manifolds},
American Mathematical Society, Providence, RI, 1970.

\bibitem[Gro70]{GR} M. Gromov and V. Rokhlin, \textit{Embeddings And Immersions In Riemannian Geometry},
Russ. Math. Surv., {\bf 25}(1970), no. 5, 1-57.

\bibitem[Gro80]{Gromov PDR} M. Gromov, \textit{Partial Differential Relations},
Springer-Verlag, 1980, 213.

\bibitem[Gro99]{Gromov green} M. Gromov, \textit{Metric Structures for Riemannian and Non-Riemannian Spaces}, 
Birkhauser, 1999.


\bibitem[Kra04]{Krat} S. Krat, \textit{Approximation problems in length geometry},
Ph. D. Thesis, The Pennsylvania State University (2004).

\bibitem[Kui55]{Kuiper} N. Kuiper, \textit{On $C^1$-isometric imbeddings},
Indag. Math., {\bf 17}(1955), 545-556.


\bibitem[Min13]{Minemyer thesis} B. Minemyer, \textit{Isometric embeddings of polyhedra},
Ph. D. Thesis, The State University of New York at Binghamton (2013).

\bibitem[Min15]{Minemyer1} B. Minemyer, \textit{Isometric embeddings of polyhedra into Euclidean space},
J. Topol. Anal., {\bf 7}(2015), no. 4, 677-692.

\bibitem[Min16]{Minemyer3} B. Minemyer, \textit{Isometric embeddings of pro-Euclidean spaces},
Anal. Geom. Metr. Spaces, {\bf 3}(2015), 317-324.

\bibitem[Mi16]{Minemyer4} B. Minemyer, \textit{On the isometric embedding problem for length metric spaces},
preprint. 

\bibitem[Min14]{Minemyer2} B. Minemyer, \textit{Simplicial isometric embeddings of polyhedra},
preprint, arXiv: 1211.0584.

\bibitem[Nas54]{Nash1} J. Nash, \textit{$C^1$ Isometric Imbeddings},
The Annals of Mathematics (2), {\bf 60}(1954), 383-396.

\bibitem[Nas56]{Nash2} J. Nash, \textit{The Imbedding Problem for Riemannian Manifolds},
The Annals of Mathematics (2), {\bf 63}(1956), 20-63.

\bibitem[Pet11]{Petrunin} A. Petrunin, \textit{On Intrinsic Isometries to Euclidean Space},
St. Petersburg Math. J., {\bf 22}(2011), no. 5, 803-812

\bibitem[PY15]{PY} A. Petrunin and A. Yashinski, \textit{Piecewise distance preserving maps},
to appear in {\it St. Petersburg Math. J.}

\bibitem[Riv03]{Rivin} I. Rivin, \textit{Some observations on the simplex},
preprint, arXiv: 0308239.



\bibitem[Zal58]{Zalgaller} V.A. Zalgaller, \textit{Isometric imbedding of polyhedra},
Dokl. Akad. Nauk SSSR (in Russian), {\bf 123}(1958), no. 4, 599-601.

\end{thebibliography}
\end{document}